\newtheorem{theorem}{Theorem}[section]
\newtheorem{example}{Example}[section]
\newtheorem{corollary}{Corollary}[section]
\newtheorem{lemma}{Lemma}[section]
\newtheorem{remark}{Remark}[section]
\begin{document}

\title{{\bf Some Results on Number Theory and Analysis}}
\author{B. M. Cerna Magui\~na, Victor H. López Solís 
and 
\\Dik D. Lujerio Garcia 
\\Departamento Acad\'emico de Matem\'atica   
\\ Facultad de Ciencias
\\Universidad Nacional Santiago Ant\'unez de Mayolo
\\Campus Shancayán, Av. Centenario 200, Huaraz, Per\'u 
\\bibianomcm@unasam.edu.pe
\\vlopezs@unasam.edu.pe
\\dlujeriog@unasam.edu.pe}

\maketitle
\begin{center}
In memory of Emilia Maguiña Cabana.
\end{center}
\begin{abstract}
In this paper we obtain bounds for integer solutions of quadratic polynomials in two variables that represent a natural number. Also we get some results on twin prime numbers. In addition, we use linear functionals to prove some results of the mathematical analysis and the Fermat's last theorem.
\end{abstract}
\vspace{0.5 cm}
\emph{\textbf{Key Words}}: \textit{Quadratic polynomials in two variables, twin prime numbers, Fermat’s last theorem.}
\section{Introduction}
In \cite{LS} appears a technique to find integer solutions of quadratic polynomials in two variables, the lower bound found in that article was not the best. In this paper we were able to find a lower bound that helps us improve the technique to obtain integer solutions of quadratic polynomials in two variables that represent a natural number. We also get some results on twin prime numbers. We show through a Lemma that the mentioned technique can be used to obtain important results, as is the case in the proofs of the Hölder and Minkowski inequalities in which it is necessary to demonstrate the useful inequality $a^{\lambda}b^{u}\leq \lambda a + u b$ with $\lambda + u =1$ and $a, b, \lambda, u$ are positive numbers. Finally we prove Fermat's last theorem (when $n$ is odd) using elementary calculus techniques.

\section{Bounds for integer solutions of quadratic polynomials in two variables.}

First consider the following general case about bounds.

\begin{theorem}\label{th1}
Let $P$ be a natural number ending in one. If there is $(A,B)\in \mathbb{N}\times \mathbb{N} $ such that:
\begin{description}
\item[(i)] $P=(10A+9)(10B+9)$ or
\item[(ii)] $P=(10A+1)(10B+1)$ or
\item[(iii)] $P=(10A+7)(10B+3).$
\end{description}	
Then we have
\begin{description}
\item for (i) $\dfrac{\sqrt{P}-9}{5}\leq A+B\leq 2 \dfrac{\sqrt{P}-9}{5},  \,\, A<B.$
\item for (ii) $\dfrac{\sqrt{P}-1}{5}\leq A+B\leq 2 \dfrac{\sqrt{P}-1}{5},  \,\, A<B.$
\item for (iii) $\dfrac{\sqrt{P+28}-7}{5}\leq A+B\leq \dfrac{2}{5} \left( \sqrt{P+2}-3 \right),\,\,A<B. $ 
\end{description}											
\end{theorem}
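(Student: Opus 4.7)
The plan is to reduce each of (i)--(iii) to estimates between $\sqrt{P}$ and the sum $x + y$ of the two displayed factors of $P$, and then translate them to $A + B$ via the linear identity $x + y = 10(A+B) + (\text{constant})$, where $x,y$ are those factors. The main algebraic tool is the identity obtained by expanding $P = xy$. In case (i) a direct computation gives
\[
\bigl(10(A+B) + 18\bigr)^{2} - 4P \;=\; 100(A-B)^{2},
\]
and in case (iii) one has
\[
\bigl(5(A+B) + 7\bigr)^{2} \;=\; P + 28 + 25(A-B)^{2} + 40A.
\]
An analogous identity holds for case (ii). The shifts $P+28$ and $P+2$ inside the square roots in case (iii) are precisely what completing the square on the non-symmetric cross term $30A + 70B$ arising from $(10A+7)(10B+3)$ produces.

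The lower bounds would then be immediate from AM-GM. Dropping the nonnegative term $100(A-B)^{2}$ from the first identity yields $10(A+B) + 18 \geq 2\sqrt{P}$, hence $(\sqrt{P}-9)/5 \leq A+B$ in case (i); case (ii) is identical with $18$ replaced by $2$. In case (iii), dropping $25(A-B)^{2}+40A \geq 0$ from the second identity gives $(5(A+B)+7)^{2} \geq P+28$, and taking square roots yields the stated lower bound.

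For the upper bounds the AM-GM argument runs the wrong way, so my plan is to square the desired inequality, cancel the common quadratic and linear parts against the expansion of $P$, and reduce it to an inequality of the form $25(A-B)^{2} \leq c_{1}AB + c_{2}(A+B) + c_{3}$ for appropriate constants. I then want to derive this from the ordering hypothesis $A < B$ (which forces $B \geq A + 1$ and so a lower bound on $AB$ in terms of $A+B$), combined with the fixed residues of $x$ and $y$ modulo $10$ that restrict how small $x$ and $y$ may be. The upper bounds are clearly the main obstacle: AM-GM bounds $x + y$ only from below, so one must genuinely use the arithmetic structure of the factorization to control very unbalanced configurations where $B \gg A$. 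I expect case (iii) to be the most delicate, since the asymmetric residues $(7,3)$ force the shifts $+28$, $+2$ inside the square roots to be tracked throughout the squaring argument.
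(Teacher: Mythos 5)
Your algebraic identities check out: in case (i), $(10(A+B)+18)^2-4P=100(A-B)^2$, and in case (iii), $(5(A+B)+7)^2=P+28+25(A-B)^2+40A$, so dropping the nonnegative terms does give the stated lower bounds $\frac{\sqrt{P}-9}{5}$, $\frac{\sqrt{P}-1}{5}$ and $\frac{\sqrt{P+28}-7}{5}$. This half of your argument is sound and is in substance the same as the paper's, since the tangent line $x+y=\frac{\sqrt{P}-9}{5}$ at the symmetric point $x_0=y_0=\frac{\sqrt{P}-9}{10}$ of the hyperbola is exactly the AM--GM inequality $x+y\geq 2\sqrt{xy}$ for the two factors; your version is cleaner because it is purely algebraic.

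The upper bounds, however, are only sketched as a plan, and the plan cannot be carried out, because the claimed upper bound is false without an additional balance hypothesis. Squaring as you propose in case (i) reduces the claim $A+B\leq\frac{2(\sqrt{P}-9)}{5}$ to $25(A-B)^2\leq 300AB+180(A+B)$, and nothing in the hypotheses ($A<B$, the residues of the factors mod $10$) controls how unbalanced $A$ and $B$ may be: take $A=1$, $B=100$, so $P=19\cdot 1009=19171$ ends in one and satisfies (i), yet $A+B=101$ while $\frac{2}{5}(\sqrt{P}-9)\approx 51.8$. In general the upper bound forces the two factors to have ratio at most about $(2+\sqrt{3})^2$, which is an extra assumption, not a consequence of the setup. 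This is not a defect peculiar to your write-up: the paper's own justification of the upper half (the sentence about projections and the intersection of $L_T$ with the $x$-axis) tacitly assumes both coordinates of the solution are at most $2x_0$, i.e.\ exactly the unproved balance condition, and in case (iii) the paper's displayed conclusion $\frac{2}{5}(\sqrt{P-12}-3)$ does not even match the $\frac{2}{5}(\sqrt{P+2}-3)$ of the statement. So for the upper bounds you should either add an explicit hypothesis bounding $B$ in terms of $A$ (or of $\sqrt{P}$) and prove the inequality under it, or record the counterexample; as it stands, the residue-and-$A<B$ route you outline cannot close the gap.
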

\begin{proof}
For equations $P=(10x+9)(10y+9)$ and $P=(10x+1)(10y+1)$ the proof process is the same. 

If $(A,B)\in \mathbb{N}\times \mathbb{N}$ is a solution of the equation (i), then $(B,A)$ is also a solution of (i). The line $L$ through the points $(A,B)$ and $(B,A)$ is $L:\,\, x+y=A+B$. The line $L_{T}$ through the point $x_{0}=y_{0}=\frac{\sqrt{P}-9}{10}$ and furthermore, as this line being tangent to the curve (i) is given by $L_{T}:x+y=\frac{\sqrt{P}-9}{5}$ (see Figure 1).
\begin{figure}[htb]\label{G1}
\centering
\includegraphics{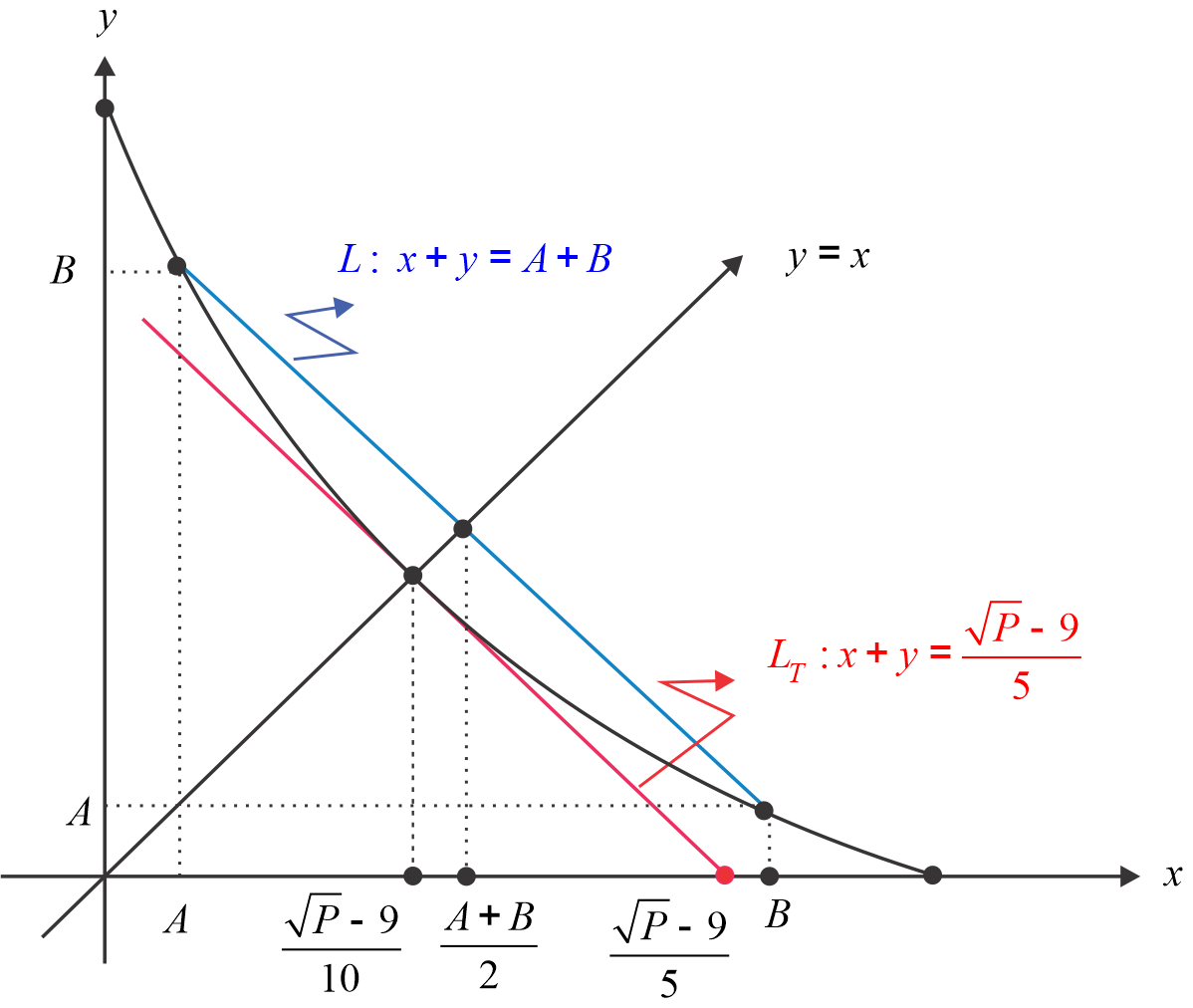}
\caption{The curve $L_{T}.$}
\end{figure}

The proyections of the vector $(x,y)\in L, L_{T}$ when $x=y$ and the vector $(x_{0}, y_{0})$ on the $x$-axis and intersection of the line $L_{T}$ with the $x$-axis given the results. 

For the equation (iii) we have
$$P=(10A+3)(10B+3)+4(10B+3),$$
from this last relationship we have
\begin{eqnarray}\label{eq 1}
q=P-4(10B+3)=(10A+3)(10B+3)
\end{eqnarray}
As it was done in (i) we apply it in (\ref{eq 1}) so
\begin{eqnarray}\label{eq 2}
\dfrac{\sqrt{q}-3}{5}\leq A+B \leq \frac{2}{5} [\sqrt{q}-3]
\end{eqnarray}
By (\ref{eq 1}), $P=q+40B+12$, from this relationship and (\ref{eq 2}) we get
\begin{eqnarray}\label{eq 3}
B\leq \frac{P-21}{50} \,\,\mbox{and}\,\, \frac{2[\sqrt{q}-3]}{5}\leq \frac{2}{5}[\sqrt{P-12}-3]
\end{eqnarray}
Then by (\ref{eq 1}) and (\ref{eq 3}) we have
\begin{eqnarray}\label{eq 4}
\frac{\sqrt{q}-3}{5} \geq \frac{\sqrt{\frac{P+24}{5}}-3}{5}.
\end{eqnarray}
Also of the relations (\ref{eq 2}), (\ref{eq 3}) and (\ref{eq 4}) we have:

\begin{eqnarray} \label{eq 5}
\dfrac{\sqrt{\dfrac{P+24}{5}-3}}{5}\leq A+B\leq \frac{2}{5}\left[ \sqrt{P-12}-3 \right].
\end{eqnarray}
Also the equation (\ref{eq 1}) can be written as
$$P=(10A+7)(10B+7)-4(10A+7)$$
from this relationship we have
\begin{eqnarray}\label{eq 6}
P+4(10A+7)=(10A+7)(10B+7).
\end{eqnarray}
Using relation (i) in (\ref{eq 6}) 
\begin{eqnarray}\label{eqq 7}
\frac{\sqrt{P+28+40A}-7}{5}\leq A+B \leq \frac{2}{5}\left[ \sqrt{P+28+40A}-7 \right]
\end{eqnarray}
of relationships (\ref{eq 5}) and (\ref{eqq 7}) with $A<B$ we get
\begin{eqnarray}\label{eqq8}
\frac{\sqrt{P+28}-7}{5}\leq A+B \leq \frac{2}{5}\left[\sqrt{P-12}-3 \right].
\end{eqnarray}



\end{proof}

\begin{example} \label{EX 1} 
In the theorem \ref{th1} of \cite{LS} we describe an example, where
\begin{description}
\item[a)] $AB=4500+4500\frac{N}{M}-\frac{1}{20}-\frac{N}{20M}$
\item[b)] $A+B=5000-5000 \frac{N}{M}-\frac{1}{18}+\frac{N}{18M}$
\item[c)] $AB+A+B=9500-500\frac{N}{M}-\frac{19}{180}+\frac{N}{150M}$
\end{description}
with $\tau=500\frac{N}{M}+\frac{19}{180}-\frac{1}{180}\frac{N}{M}$ and $\tau$ asume $26$ posibles valores para $A\geq 11$.
From (b) and using (i) we get  
\begin{eqnarray}\label{eq 7}
0,92482 \leq \frac{N}{M}\leq 0,96241.
\end{eqnarray}
Hence replacing $\tau$ in (\ref{eq 7}), we have $462 \leq \tau \leq 481$.
In addition, the following expressions are obtained
\begin{equation}\label{eq 8}
(A-1)(B-1)=-501 +14\tau ; \,\,\, (A+1)(B+1)=501-\tau ; \,\,\, AB=9\tau + 4499
\end{equation}
and from there, for any value of $A=\textup{\r{3}}+2$ or $\textup{\r{3}}+1$ we get $\tau =\textup{\r{3}}$. So $\tau \in \{462,468,471,474, 477, 480 \}$, $\tau$ assumes seven possible values.

As 
\begin{eqnarray}
P&=&\textup{\r{4}}+3=100AB+90(A+B)+ 81=(10A+9)(10B+9) \label{eq 9 }\\
& & \textup{\r{4}}+3=2(A+B)+1, \nonumber
\end{eqnarray}
thus 
$$A+B=\textup{\r{4}}+1 \,\, \mbox{or} \,\, A+B=\textup{\r{4}}+3. $$
Also by (\ref{eq 9 }) we have
\begin{eqnarray}\label{eq 10 }
2A=  \textup{\r{4}} \,\,\, \mbox{and}\,\, 2B=\textup{\r{4}}+2 \,\,\, \mbox{or} \,\,\, 2A=\textup{\r{4}}+2 \mbox{ and } \,\,\, 2B=\textup{\r{4}}
\end{eqnarray}
and by (\ref{eq 10 }) and (\ref{eq 8}) 
\begin{eqnarray}
(2A-2)(B-1)&=& -501\times 2+38 \tau  \nonumber \\\
(\textup{\r{4}}-2)(B-1)&=& 2\tau - (\textup{\r{4}}+2) \nonumber\\
\textup{\r{4}}-2B+2 &=& 2\tau \label{eq 11}
\end{eqnarray}
where the result is equal when $2A=\textup{\r{4}} +2$ y $2B=\textup{\r{4}}$. \newline
By (\ref{eq 11}) it is clear that $\tau =\textup{\r{4}}+1$ or $\tau=\textup{\r{4}}+3$. Therefore $\tau \in \{465,471,477\}$.

If $A+B=4k_{2}+1$, then from (\ref{eq 8}) we know that $\tau= \textup{\r{3}}$ and by (b) $A+B=5001-10\tau$, so $A+B=\textup{\r{3}}$, that is,  $A+B=3k_{1}=4k_{2}+1=5001-10\tau$, which forces us to have $\tau=\textup{\r{6}}$ which is false.
Therefore 
\begin{eqnarray}\label{eq 12}
A+B=4k+3
\end{eqnarray}
and by (\ref{eq 12}) the following possibilities are obtained
\begin{eqnarray} \label{eq 13}
\tau =12 \lambda +9 \,\,\,\,\, \mbox{or}\,\,\,\,\,\tau= 12 \lambda+3
\end{eqnarray}
but $A=3a+1$, $B=3b+2$ or $A=3a+2$, $B=3a+1$, replacing these relations in (\ref{eq 8}) and (\ref{eq 13}) we get
\begin{equation*}\label{eq 14}
3a(3b+1)=-501+3(4\lambda +3)\cdot 19 \,\, \mbox{and}\,\, (3a+2)(3b+3)=9501-(12\lambda +9).
\end{equation*}
For $A=3a+1$, $B=3b+2$, $2A=\textup{\r{4}}$ and $2B=\textup{\r{4}}+2$ we get $a=\textup{\r{4}}$ and $b=\textup{\r{4}}$ or $a=\textup{\r{4}}+1$ and $b=\textup{\r{4}}+3$, by replacing these values in (\ref{eq 14}) we get a contradiction.

The same results are obtained for the other cases. Therefore, the only possibility that guarantees a solution is when $\tau=12\lambda +3$ that is $\tau \in \{471\}$.
\end{example}
Using the ideas of the Theorem \ref{th1} of \cite{LS} we will demonstrate a very important lemma which serves to demonstrate the Hölder inequality and consequently the Minkowsky inequality.
\begin{lemma}\label{Lema1}
If $a,b,\lambda$ and $u$ are non-negative numbers and $u+\lambda =1$, then $a^{\lambda}b^{u}\leq \lambda a+ub$.
\end{lemma}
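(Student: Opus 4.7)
The plan is to adapt the tangent-line idea used in the proof of Theorem \ref{th1}: find a line that is tangent to a suitable concave curve at the symmetric point $a=b$, and then use concavity to sandwich the desired inequality by projecting onto a coordinate axis.

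First I would dispose of the degenerate cases. If $\lambda = 0$ (so $u=1$) or $u=0$ (so $\lambda=1$), the inequality $a^{\lambda}b^{u}\leq \lambda a+ub$ collapses to an equality. If $a=0$ or $b=0$ while $0<\lambda,u<1$, the left-hand side is $0$ and the right-hand side is non-negative. Hence one may assume $0<\lambda,u<1$ and $a,b>0$, which is the only substantive case.

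Next, fix $b>0$ and consider $f\colon(0,\infty)\to(0,\infty)$, $f(x)=x^{\lambda}b^{u}$. Since
\[
f''(x)=\lambda(\lambda-1)x^{\lambda-2}b^{u}<0,
\]
$f$ is strictly concave. Following the strategy in Theorem \ref{th1}, the natural symmetric point to linearize at is $x=b$: one computes $f(b)=b^{\lambda+u}=b$ and $f'(b)=\lambda b^{\lambda+u-1}=\lambda$, so the tangent line at $x=b$ is
\[
T(x)=b+\lambda(x-b)=\lambda x+ub.
\]
By the concavity of $f$ the graph of $f$ lies below its tangent, and projecting the evaluation at $x=a$ onto the vertical axis gives $f(a)\leq T(a)$, i.e.\ $a^{\lambda}b^{u}\leq \lambda a+ub$.

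The only obstacle I anticipate is purely presentational: making the argument look like the two-variable geometric setup of Theorem \ref{th1}. This can be accomplished by working with the level curve $\{(x,y):x^{\lambda}y^{u}=a^{\lambda}b^{u}\}$, whose tangent line at the symmetric point $(c,c)$ with $c=a^{\lambda}b^{u}$ is precisely $\lambda x+uy=c$; projecting this line and the line $\lambda x+uy=\lambda a+ub$ passing through $(a,b)$ onto the $x$-axis gives the inequality $\lambda a+ub\geq c=a^{\lambda}b^{u}$ once one notes that the superlevel set $\{x^{\lambda}y^{u}\geq c\}$ is convex (since $-\log(x^{\lambda}y^{u})$ is convex in $(x,y)$). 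Either framing yields the result using only elementary calculus.
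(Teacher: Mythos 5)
Your proof is correct, but it takes a genuinely different route from the paper. You use the classical tangent-line (concavity) argument: after disposing of the degenerate cases, you fix $b>0$, note that $f(x)=x^{\lambda}b^{u}$ is concave because $f''(x)=\lambda(\lambda-1)x^{\lambda-2}b^{u}<0$, and observe that its tangent at $x=b$ is exactly $T(x)=\lambda x+ub$, so $f(a)\leq T(a)$ gives $a^{\lambda}b^{u}\leq\lambda a+ub$; your alternative two-variable framing via the convexity of the superlevel set $\{x^{\lambda}y^{u}\geq c\}$ (equivalently, convexity of $-\lambda\log x-u\log y$) and its supporting line $\lambda x+uy=c$ at $(c,c)$ is also sound. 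The paper instead stays within its linear-functional technique: it sets $F(x,y)=(a^{\lambda}b^{u})x+(\lambda a+ub)y$, decomposes $(1,1)$ along $\mathrm{Ker}\,F$ and $\{\mathrm{Ker}\,F\}^{\perp}$, derives the identity $a^{\lambda}b^{u}\bigl[1-\lambda_{2}a^{\lambda}b^{u}\bigr]=(\lambda a+ub)\bigl[\lambda_{2}(\lambda a+ub)-1\bigr]$, and then argues that the sign pattern forcing the reverse inequality cannot occur, by letting $a$ or $b$ be ``big enough'' to force $\lambda_{2}=0$. What each approach buys: the paper's version showcases the kernel-decomposition method that is the unifying theme of the article, but its exclusion of the bad case is delicate (the numbers $a,b,\lambda_{2}$ are fixed once and for all, so the ``make $a$ or $b$ large'' step needs more justification), whereas your concavity argument is shorter, self-contained, fully rigorous, and uses only elementary calculus, at the cost of not illustrating the functional-analytic machinery the paper is promoting.
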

\begin{proof}
Let $F(x,y)=(a^{\lambda}b^{u})x+ (\lambda a + u b)y$ be so it is clear that $F$ is a continuous linear functional, then $\mbox{Ker} F = (-(\lambda a +ub), a^{\lambda}b^{u})$, $\{\mbox{ Ker} F \} ^{\perp}=(a^{\lambda}b^{u}, \lambda a +ub)$.
Thus
\begin{eqnarray}\label{eq 15}
F(1,1)=a^{\lambda}b^{u}+\lambda a +ub.
\end{eqnarray}
As $\mathbb{R}^{2}=\mbox{Ker} F \oplus \{\mbox{ Ker} F \} ^{\perp}$ we have 
\begin{eqnarray}\label{eq 16}
(1,1)=\lambda_{1}\left(-(\lambda a +ub), a^{\lambda}b^{u} \right)+\lambda_{2}\left(a^{\lambda}b^{u}, \lambda a+ u b\right)
\end{eqnarray}
and relationships (\ref{eq 15}) and (\ref{eq 16}) we get
\begin{eqnarray}\label{eq 17}
a^{\lambda}b^{u} +\lambda a + ub=\lambda_{2} \left(a^{2\lambda}b^{2u}+(\lambda a +ub)^{2} \right)
\end{eqnarray}
and
\begin{eqnarray}\label{eq 18}
a^{\lambda}b^{u}[1-\lambda_{2}a^{\lambda}b^{u}]=(\lambda a +ub) \left[ \lambda _{2}(\lambda a +ub) -1 \right]
\end{eqnarray}
which implies
\begin{eqnarray}\label{eq 19}
1-\lambda _{2} a^{\lambda}b^{u}\geq 0 \,\,\,\,\,\,\mbox{and} \,\,\,\,\,\, \lambda _{2}(\lambda a +ub)-1\geq 0
\end{eqnarray}
or
\begin{eqnarray}\label{eq 20}
1-\lambda _{2} a^{\lambda}b^{u}\leq 0 \,\,\,\,\,\,\mbox{and} \,\,\,\,\,\, \lambda _{2}(\lambda a +ub)-1 \leq 0.
\end{eqnarray}
By (\ref{eq 19}) we get
\begin{eqnarray}\label{eq 21}
\lambda a +\lambda b \geq a^{\lambda}b^{u}
\end{eqnarray} 
and by (\ref{eq 20}) 
\begin{eqnarray}\label{eq 22}
\lambda a + u b\leq a^{\lambda}b^{u}.
\end{eqnarray}
We will show that $(\ref{eq 22})$ not happenes. By (\ref{eq 22}) and (\ref{eq 18}) 
\begin{eqnarray}\label{eq 23}
\lambda _{2}\left[ a^{\lambda}b^{u}+\lambda a+ub \right] \leq 2 
\end{eqnarray}

If $a,b\neq 0$. Thus by (\ref{eq 23}) and using $\lambda +u=1$
\begin{equation}\label{eq 24}
\lambda _{2}\leq \frac{2}{a} \,\, \mbox{if} \,\, a\leq b \,\,\,\,\, \mbox{or}\,\,\,\,\, \lambda_{2}\leq \frac{2}{b} \,\, \mbox{if} \,\,b\leq a
\end{equation}
making $a$ or $b$ big enough, we get the only chance  $\lambda _{2}=0$.

Therefore (\ref{eq 22}) is impossible, so only the relationship happens (\ref{eq 21}).
\end{proof}

\begin{corollary}
Let $H$ be a Pre-Hilbert space over $\mathbb{C}$, then
\begin{center}
  $  |<x,y>|\leq \sqrt{<x,x>}\sqrt{<y,y>}.$
\end{center}
\end{corollary}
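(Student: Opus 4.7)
The plan is to obtain Cauchy--Schwarz from Lemma~\ref{Lema1} in its symmetric form $\sqrt{ab}\le(a+b)/2$ (i.e.\ $\lambda=u=1/2$). First I would dispose of the degenerate cases: if $\langle x,y\rangle=0$ the inequality is obvious, and if $\langle y,y\rangle=0$ then substituting $t=-s\,\overline{\langle x,y\rangle}$ into $0\le\langle x+ty,\,x+ty\rangle=\langle x,x\rangle+t\langle y,x\rangle+\bar t\langle x,y\rangle$ and letting $s\to\infty$ forces $\langle x,y\rangle=0$ (the roles of $x$ and $y$ being symmetric). Thus I may assume $\langle x,x\rangle,\langle y,y\rangle>0$ and write $\langle x,y\rangle=r\,e^{i\theta}$ with $r>0$.

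The second step is a phase-corrected polarisation. For every real parameter $s>0$, expanding
$$
0\;\le\;\bigl\langle x-s\,e^{i\theta}y,\;x-s\,e^{i\theta}y\bigr\rangle
$$
by sesquilinearity and using $e^{-i\theta}\langle x,y\rangle=r=e^{i\theta}\langle y,x\rangle$, the two cross terms each reduce to $-sr$, yielding the one-parameter bound
$$
2rs\;\le\;\langle x,x\rangle+s^{2}\langle y,y\rangle\qquad(s>0).
$$
Dividing by $s$ turns this into $2r\le\langle x,x\rangle/s+s\,\langle y,y\rangle$ for every $s>0$.

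Finally, I would invoke Lemma~\ref{Lema1} with $a=\langle x,x\rangle/s$, $b=s\,\langle y,y\rangle$ and $\lambda=u=1/2$, which gives
$$
\sqrt{\langle x,x\rangle\langle y,y\rangle}\;=\;a^{1/2}b^{1/2}\;\le\;\tfrac{a+b}{2}\;=\;\tfrac12\!\left(\tfrac{\langle x,x\rangle}{s}+s\,\langle y,y\rangle\right),
$$
with equality iff $a=b$; choosing $s=\sqrt{\langle x,x\rangle/\langle y,y\rangle}$ realises this equality, so the right-hand side of the parametric inequality collapses to $2\sqrt{\langle x,x\rangle\langle y,y\rangle}$, and we conclude $|\langle x,y\rangle|=r\le\sqrt{\langle x,x\rangle}\sqrt{\langle y,y\rangle}$. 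The only mildly delicate point is aligning the complex phase $e^{i\theta}$ so that both cross terms coalesce into a single real contribution $-2rs$; once this bookkeeping is settled, Lemma~\ref{Lema1} supplies the sharp final step automatically.
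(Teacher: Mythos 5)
Your argument is correct in substance, but note that the paper itself offers no proof of this corollary at all: it is stated bare, immediately after Lemma~\ref{Lema1}, evidently with the intention that it be seen as an application of that lemma. So there is nothing in the paper to compare against line by line; what you have supplied is the classical proof via positivity of the quadratic form, namely expanding $0\le\langle x-s e^{i\theta}y,\,x-s e^{i\theta}y\rangle$ to get $2rs\le\langle x,x\rangle+s^{2}\langle y,y\rangle$ and then optimizing over $s>0$. Two remarks. First, your invocation of Lemma~\ref{Lema1} is actually ornamental: once you choose $s=\sqrt{\langle x,x\rangle/\langle y,y\rangle}$, the two terms $\langle x,x\rangle/s$ and $s\langle y,y\rangle$ are both equal to $\sqrt{\langle x,x\rangle\langle y,y\rangle}$ and the bound collapses by direct substitution, with no appeal to $a^{1/2}b^{1/2}\le(a+b)/2$ needed; equivalently, the real content of Cauchy--Schwarz here is the positivity of the inner product, not the AM--GM inequality, so your proof is genuinely more elementary than (and independent of) the lemma the paper presumably had in mind. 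Second, in your degenerate case $\langle y,y\rangle=0$ there is a small conjugation slip: with the expansion you wrote (linear in the first slot, so cross terms $t\langle y,x\rangle+\bar t\langle x,y\rangle$), the substitution $t=-s\,\overline{\langle x,y\rangle}$ produces $-2s\,\mathrm{Re}\bigl(\langle x,y\rangle^{2}\bigr)$, which letting $s\to\pm\infty$ only forces $\mathrm{Re}\bigl(\langle x,y\rangle^{2}\bigr)=0$; you should instead take $t=-s\,\langle x,y\rangle$, which makes both cross terms equal to $-s\,|\langle x,y\rangle|^{2}$ and yields $\langle x,y\rangle=0$ as claimed. This is pure bookkeeping and does not affect the main step, where your phase alignment is handled correctly.
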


Now we will use the Theorem $\ref{th1}$ to study the twin prime numbers. First we have a general result.
\begin{theorem}\label{th2}
Let $P$ be a natural number ending in $1$ and $P=\textup{\r{3}}+2$. If \textbf{(i)}  $P=(10x+9)(10y+9)$ or \textbf{(ii)} $P=(10x+1)(10y+1)$ or \textbf{(iii)} $P=(10x+7)(10y+3)$ and \textbf{(iv)} $P+2=(10x+9)(10y+7)$ or \textbf{(v)} $P+2=(10x+1)(10y+3)$. If there are integer solutions $(A,B)\in \mathbb{N}\times \mathbb{N}$ and $(C,D)\in \mathbb{N}\times \mathbb{N}$ of the quadratic equations representing $P$ and $P+2$, then \newline

\textbf{(i)} $ \dfrac{\sqrt{P}-9}{5}\leq A+B\leq \dfrac{2(\sqrt{P}-9)}{5}$ or \newline

\textbf{(ii)} $\dfrac{\sqrt{P}-1}{5}\leq A+B\leq \dfrac{2(\sqrt{P}-1)}{5}$ or \newline

\textbf{(iii)} $\dfrac{\sqrt{P+28}-7}{5}\leq A +B\leq \frac{2}{5}\left(\sqrt{P-12}-3 \right)$ with $A<B$ and \newline 

\textbf{(iv)} $\dfrac{\sqrt{P+20}-9}{5}\leq C +D\leq \frac{2}{5}\left(\sqrt{P-12}-7 \right)$ with $C<D$. or \newline

\textbf{(v)} $\dfrac{\sqrt{P+8}-3}{5}\leq C +D\leq \frac{2}{5}\left(\sqrt{P}-1 \right)$ with $C<D$. \newline

\end{theorem}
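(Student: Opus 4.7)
The cases (i), (ii), (iii) are immediate consequences of applying Theorem~\ref{th1} directly to $P$, so no new argument is required for them. The genuinely new content is (iv) and (v), which concern bounds for a solution pair of $P+2$ when $P+2$ factors as a product of linear forms ending in different digits. For these I plan to reuse the reduction trick from the proof of Theorem~\ref{th1}(iii): rewrite the product so that both factors end in the same decimal digit, reducing to a ``pure'' case to which the tangent-line estimate applies.

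For (iv), starting from $P+2=(10C+9)(10D+7)$, the identities
\begin{equation*}
(10C+9)(10D+7)=(10C+9)(10D+9)-2(10C+9)=(10C+7)(10D+7)+2(10D+7)
\end{equation*}
give $(10C+9)(10D+9)=P+20+20C$ and $(10C+7)(10D+7)=P-12-20D$. Applying Theorem~\ref{th1}(i) to the first equation yields $\frac{\sqrt{P+20+20C}-9}{5}\leq C+D$, and dropping the non-negative term $20C$ inside the radical produces the claimed lower bound. For the upper bound, the tangent-line argument used in (i)--(ii), now applied to factors ending in $7$, gives $C+D\leq \frac{2}{5}(\sqrt{P-12-20D}-7)$, and replacing $D$ by $0$ on the right yields $\frac{2}{5}(\sqrt{P-12}-7)$. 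Case (v) is entirely parallel: the identities
\begin{equation*}
(10C+1)(10D+3)=(10C+3)(10D+3)-2(10D+3)=(10C+1)(10D+1)+2(10C+1)
\end{equation*}
produce $(10C+3)(10D+3)=P+8+20D$ and $(10C+1)(10D+1)=P-20C$; the ``pure $3$'' tangent bound and Theorem~\ref{th1}(ii) then supply the two inequalities after using $D\geq 0$ and $C\geq 0$ respectively.

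The main obstacle is that the pure-form bounds for factors ending in $3$ or $7$ are not explicitly stated in Theorem~\ref{th1}, so I would first have to verify them by rerunning the tangent-line construction: the curve $(10x+d)(10y+d)=q$ for $d\in\{3,7\}$ is tangent to the line $x+y=\frac{\sqrt{q}-d}{5}$ at $x=y=\frac{\sqrt{q}-d}{10}$, and the projection argument from Theorem~\ref{th1}(i) carries over verbatim. A secondary subtlety is monitoring the sign of the residue term introduced by the reduction (it is $+20C$ in one equation and $-20D$ in the other), so that in each case the substitution $C=0$ or $D=0$ is the \emph{correct} direction for preserving the inequality. Finally, the hypothesis $C<D$ (respectively $A<B$) is used exactly as in Theorem~\ref{th1} to obtain the factor-of-$2$ in the upper bounds.
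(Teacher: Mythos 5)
Your proposal is correct relative to the paper's machinery and follows essentially the same route: the paper's own proof of this theorem is a one-line appeal to the method of Theorem~\ref{th1}, and your reductions for (iv) and (v) — rewriting the mixed-digit product in two ways as a same-digit product plus a signed correction term ($P+20+20C$, $P-12-20D$ and $P+8+20D$, $P-20C$) and then applying the tangent-line bounds — are exactly the device the paper uses for case (iii) of Theorem~\ref{th1}. Your attention to the sign of the correction term and to where $A<B$ (resp.\ $C<D$) is needed matches the intended argument, so no further comparison is necessary.
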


\begin{proof}
The results \textbf{(i)}, \textbf{(ii)}, \textbf{(iii)}, \textbf{(iv)} and  \textbf{(v)} are obtained in a similar way to what was done in the Theorem \ref{th1}. 
\end{proof}

\begin{remark}
From the Theorem \ref{th2} is easy to obtain bounds for $A, B, C, D, AM$ and $CD$. To find the integer solutions it is necessary to use the Theorem \ref{th1} of \cite{LS}, where
\begin{eqnarray*}
AB=\frac{(P-81)}{200}\frac{(M+N)}{M},& &\quad A+B=\frac{(P-81)}{180}\frac{(M-N)}{M}\\
CD=\frac{(P-61)}{200}\frac{(M_{1}+N_{1})}{M_{1}},& & \quad 7C+9D=\frac{(P-61)}{20}\frac{(M_{1}-N_{1})}{N_{1}}.
\end{eqnarray*}
with $A<B$, $C<D$, $N<M$, $M_{1}<N_{1}$, $N$ and $M$ are relative primes,  $N_{1}$ and $M_{1}$ are relative primes.

Also for any $A,B,C,D\in \mathbb{N}$ we have $A+B=\textup{\r{3}}$ and $C+D=\textup{\r{3}}$, and we can use the technique of the Example \ref{EX 1}.

\end{remark}

\begin{theorem}
Let $P$ be a natural number with $P=\textup{\r{3}}+2$. If $P$ and $P+2$ are prime numbers, then exist $m$ and $n$ relatively prime such that $p+1=m=\sqrt{n-1}$ and if $n-2=p_{1}p_{2}$ where $p_{1}$ and $p_{2}$ are prime numbers, then $P$ and $P+2$ are prime numbers.
\end{theorem}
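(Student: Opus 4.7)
The statement is a biconditional characterization of the twin prime property encoded via a pair $(m, n)$, and both directions rest on the algebraic identity $(P+1)^{2} - 1 = P(P+2)$. The plan is to prove the two implications separately.

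For the forward direction, assume $P$ and $P+2$ are prime with $P \equiv 2 \pmod{3}$. I would set $m := P+1$ and $n := m^{2} + 1 = (P+1)^{2} + 1$. Then $\sqrt{n-1} = m = P+1$ holds by construction, and coprimality of $(m, n)$ is immediate from $n = m^{2} + 1 \equiv 1 \pmod{m}$, so $\gcd(m, n) = 1$. Moreover $n - 2 = (P+1)^{2} - 1 = P(P+2)$, which by hypothesis is a product of two primes, so this choice of $(m,n)$ simultaneously witnesses the existence claim and the auxiliary factorization $n-2=p_1p_2$.

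For the converse, suppose $m$ and $n$ are coprime with $P+1 = m = \sqrt{n-1}$ and $n - 2 = p_{1}p_{2}$ for primes $p_{1}, p_{2}$. Squaring gives $n = (P+1)^{2} + 1$, so $P(P+2) = (P+1)^{2} - 1 = n - 2 = p_{1}p_{2}$. The remaining task is to conclude $\{P, P+2\} = \{p_{1}, p_{2}\}$ so that both $P$ and $P+2$ are prime.

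The main obstacle will be securing $\gcd(P, P+2) = 1$ and ruling out the degenerate small cases. If $P$ were even, then both $P$ and $P+2$ would be even, forcing $4 \mid p_{1}p_{2}$, hence $p_{1} = p_{2} = 2$ and $p_{1}p_{2} = 4$; but any even $P \geq 2$ gives $P(P+2) \geq 8$, a contradiction. Similarly $P = 1$ yields $P(P+2) = 3$, which is prime and not of the form $p_{1}p_{2}$. Therefore $P$ is odd with $P \geq 3$, so $\gcd(P, P+2) = \gcd(P, 2) = 1$. Two coprime positive integers, both greater than $1$, whose product equals $p_{1}p_{2}$ must coincide as a set with $\{p_{1}, p_{2}\}$ by unique factorization; consequently $P = p_{1}$ (say) and $P+2 = p_{2}$, so both are prime. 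The hypothesis $P \equiv 2 \pmod{3}$ plays only a supporting role, automatically excluding $P = 3$ and guaranteeing $3 \nmid P(P+2)$, which keeps the parity argument from running afoul of a hidden small case.
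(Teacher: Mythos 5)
Your proof is correct, but it takes a genuinely different route from the paper. The paper obtains $m$ and $n$ from its linear-functional machinery: it writes $F(x,y)=Px+(P+2)y$, decomposes $(1,1)$ along $\mathrm{Ker}\,F$ and its orthogonal complement, and reads off $\lambda_{2}=\frac{2P+2}{P^{2}+(P+2)^{2}}=\frac{P+1}{(P+1)^{2}+1}$, whose numerator and denominator in lowest terms are exactly your $m=P+1$ and $n=(P+1)^{2}+1$; you instead construct this pair directly and verify $\gcd(m,n)=\gcd\bigl(P+1,(P+1)^{2}+1\bigr)=1$ by hand, which makes plain that the forward implication never actually uses the primality of $P$ and $P+2$ (the paper's phrase ``using the fact that $P+2$ and $P$ are primes'' is dispensable). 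For the converse, both arguments reduce to the identity $n-2=(P+1)^{2}-1=P(P+2)=p_{1}p_{2}$, but the paper stops there and simply asserts the conclusion, whereas you supply the missing step: each of $P$ and $P+2$ exceeds $1$ after excluding the degenerate cases, so by unique factorization a product of two factors greater than $1$ equaling a product of two primes forces each factor to be prime. What the paper's derivation buys is continuity with its general kernel-decomposition technique (used again for the H\"older lemma and the Fermat theorem); what your version buys is brevity, an explicit treatment of the small and even cases, and the observation that the coprimality of $m$ and $n$ is automatic rather than a consequence of the twin-prime hypothesis.
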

\begin{proof}
Consider the aplication $F(x,y)=Px+(P+2)y$, similar to the Lemma \ref{Lema1} we have
$\textup{Ker}\, F= \{(-(P+2),P)\}$, $\{\textup{Ker}\, F\}^{\perp}=\{(P, P+2)\}$. Then 
\begin{equation} \label{eqw 28}
F(1,1)=P+P+2.
\end{equation}
Also
\begin{equation}\label{eqw 29}
(1,1)=\lambda _{1} \left( -(P+2),P\right)+\lambda_{2}\left(P,P+2 \right)
\end{equation}
and by (\ref{eqw 28}) and (\ref{eqw 29}) we have
\begin{equation}\label{eqw 30}
P+2+P=\lambda_{2}\left(P^{2}+(P+2)^{2} \right).
\end{equation}
Hence $\lambda _{2}\in \mathbb{Q}$. Let $\lambda _{2}=\frac{m}{n}$ be with $m$ and $n$ relatively prime. By (\ref{eqw 30}) we get
\begin{equation}\label{eqw 31}
(P+2)\left(m(P+2)-n \right)=P(n-mP)
\end{equation}
and using the fact that $P+2$ and $P$ are primes, then $P+1=m=\sqrt{n-1}$.

If $m=p+1=\sqrt{n-1}$, then $P^2+2p+1=n-1$ and so $P(P+2)=n-2=p_{1}p_{2}$. 
Therefore $P$ and $P+2$ are prime numbers.
\end{proof}
In the following theorem we prove the Fermat’s last theorem using basic tools developed in this article.

\begin{theorem}\label{th4}
If let $A,B,C\in \mathbb{N}$ be are relatively prime numbers to each other. For $n\in\mathbb{N}$ odd, $n\geq 3$, the equation $A^{n}+B^{n}=C^{n}$ has no integer solution. 															
\end{theorem}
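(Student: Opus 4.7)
The plan is to argue by contradiction, adapting the linear-functional device of Lemma \ref{Lema1}. Suppose $A,B,C\in\mathbb{N}$ are pairwise coprime with $A^{n}+B^{n}=C^{n}$ for some odd $n\geq 3$; I aim to produce an impossible arithmetic relation by combining the rigidity from integrality with the rationality yielded by the orthogonal decomposition.

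First I would exploit the odd exponent through the factorization
$$C^{n}=A^{n}+B^{n}=(A+B)\sum_{k=0}^{n-1}(-1)^{k}A^{n-1-k}B^{k},$$
noting the classical consequence of $\gcd(A,B)=1$: the two factors on the right share only divisors of $n$, so, up to $n$-adic corrections, each must itself be an $n$-th power. This furnishes rigid divisibility links between $A+B$ and $C$. Next, paralleling Lemma \ref{Lema1}, I would introduce the continuous linear functional $F(x,y)=A^{n}x+B^{n}y$ on $\mathbb{R}^{2}$, whose kernel and orthogonal complement are spanned respectively by $(-B^{n},A^{n})$ and $(A^{n},B^{n})$. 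Decomposing $(1,1)=\lambda_{1}(-B^{n},A^{n})+\lambda_{2}(A^{n},B^{n})$ and using $F(1,1)=C^{n}$ yields
$$\lambda_{2}=\frac{C^{n}}{A^{2n}+B^{2n}}\in\mathbb{Q},$$
so that, writing $\lambda_{2}=p/q$ in lowest terms, one obtains the integer identity $q\,C^{n}=p(A^{2n}+B^{2n})$, which interacts nontrivially with the factorization above.

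The final step is the elementary-calculus ingredient announced in the introduction: applying the Mean Value Theorem to $t\mapsto t^{n}$ on $[\max(A,B),C]$ wedges $C^{n}-\max(A,B)^{n}$ between two consecutive $n$-th powers, and when combined with the rational identity for $\lambda_{2}$ and with the coprime factorization of $A^{n}+B^{n}$ it should force a parity or mod-$n$ contradiction with the coprimality of the triple. The main obstacle, and by far the delicate part, is precisely this closing squeeze: the orthogonal decomposition is an algebraic tautology the moment $A^{n}+B^{n}=C^{n}$ is assumed, so all genuine contradictory content must come from the interaction between integrality of $A,B,C$ and the factorization of $A^{n}+B^{n}$. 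Closing this interaction by elementary means is historically where every short approach to Fermat's Last Theorem has broken down, and I would expect the real work of the argument to live in carefully tracking divisibility modulo $2$ and modulo $n$ through the identity $q\,C^{n}=p(A^{2n}+B^{2n})$ together with $C^{n}=(A+B)\,S$.
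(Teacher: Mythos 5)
Your proposal is not a proof but a plan whose decisive step is explicitly left open, and that missing step is the entire content of the theorem. The two ingredients you do carry out yield nothing: as you yourself observe, the decomposition $(1,1)=\lambda_{1}(-B^{n},A^{n})+\lambda_{2}(A^{n},B^{n})$ is an algebraic tautology, and the identity $q\,C^{n}=p(A^{2n}+B^{2n})$ merely restates $\lambda_{2}=C^{n}/(A^{2n}+B^{2n})$, which is automatically rational for integers $A,B,C$ and carries no arithmetic information about the Fermat equation. The Mean Value Theorem step also cannot do what you ask of it: on $[\max(A,B),C]$ it gives $C^{n}-\max(A,B)^{n}=n\xi^{n-1}\bigl(C-\max(A,B)\bigr)$ for some intermediate $\xi$, but under the assumed equation $C^{n}-\max(A,B)^{n}$ equals $\min(A,B)^{n}$, which is exactly an $n$-th power, so there is nothing to ``wedge between consecutive $n$-th powers'' and no parity or mod-$n$ obstruction appears. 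Finally, the factorization $C^{n}=(A+B)\sum_{k=0}^{n-1}(-1)^{k}A^{n-1-k}B^{k}$ with the two factors nearly coprime is the classical starting point of the elementary attacks on the first case of Fermat's Last Theorem, and it is well known that this alone does not close the argument; your proposal acknowledges this (``historically where every short approach\dots has broken down'') without supplying the missing mechanism. So the contradiction is never derived, and the gap is not a technical detail but the theorem itself.

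For comparison, the paper proceeds differently: it uses the three-variable functional $F(x,y,z)=A^{n}x+B^{n}y+C^{n}z$, writes $2C^{n}=\lambda_{3}(A^{2n}+B^{2n}+C^{2n})$, introduces a trigonometric parametrization $A^{n}=\frac{1}{\lambda_{3}}\sin\phi\cos\theta$, $B^{n}=\frac{1}{\lambda_{3}}\sin\phi\sin\theta$, $C^{n}=\frac{1}{\lambda_{3}}(\cos\phi+1)$, and then pursues a descent-style analysis of the rational parameters $\lambda=r/t$, $\lambda_{3}=M/N$, ending in a parity contradiction from $(r^{2}+t^{2})^{b+1}=w\cdot 2^{b+1}t^{2b+1}$. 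Whatever one thinks of the individual steps there, the paper at least attempts to extract arithmetic consequences (the formulas $C^{n}=2t$, $B^{n}=t-r$, $A^{n}=t+r$, the evenness analysis, the non-square claim for $r^{2}+t^{2}$), whereas your outline stops exactly where such consequences would have to be produced. To salvage your route you would need to exhibit a concrete impossible congruence or divisibility relation, not an expectation that one exists.
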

\begin{proof}
Let $F(x,y,z)=A^{n}x+B^{n}y+C^{n}z$ be a real function of several variables, then consider $$ KerF =\{(-C^{n}, 0, A^{n}),(-B^{n}, A^{n}, 0)\}, \{Ker F\}^{\perp}=\left\{\left(A^{n}, B^{n}, C^{n}\right)\right\}.$$
Supuse that equation 
\begin{equation}\label{e1}
    A^{n}+B^{n}=C^{n}
\end{equation}
has integer solution, then 
\begin{eqnarray}\label{eq*1}
F(1,1,1)=2C^{n}.
\end{eqnarray}
Also $(1,1,1)=\lambda_{1}(-C^{n},0,A^{n})+\lambda_{2}(-B^{n},A^{n},0)+\lambda_{3}(A^{n}, B^{n}, C^{n})$. 
Hence, applying F we obtain
\begin{eqnarray}\label{eq*2}
2C^{n}=\lambda_{3}(A^{2n}+B^{2n}+C^{2n})
\end{eqnarray}
From (\ref{eq*2}) 
\begin{eqnarray}\label{eq*3}
\left\{ \begin{array}{l}
A^{n}=\frac{1}{\lambda_{3}}\sin \phi \cos \theta \\
B^{n}=\frac{1}{\lambda_{3}}\sin \phi \sin \theta \\
C^{n}=\frac{1}{\lambda_{3}}(\cos \phi +1)
\end{array}
\right.
\end{eqnarray}
So by (\ref{eq*3}) and using $(\ref{e1})$ we get

\begin{eqnarray}\label{eq*4}
\left(\cos \theta + \sin \theta  \right)=\frac{(\cos \phi +1)}{\sin \phi}=K
\end{eqnarray}
It is clear that $\theta=\theta (A,B,C,n)$, $\phi=\phi (A,B,C,n)$, $\lambda_{3}=\lambda_{3} (A,B,C,n)$ and $K=K(A,B,C,n)$, $\theta \in \langle 0, \frac{\pi}{2}\rangle$, $\phi \in \langle 0, \pi \rangle$.

From (\ref{eq*4})  $\sin \theta = \sqrt{\dfrac{1-K\sqrt{2-K^{2}}}{2}}$, $\cos \theta =\sqrt{\dfrac{1+K\sqrt{2-K^{2}}}{2}} $, $\cos \phi =\dfrac{K^{2}-1}{K^{2}+1}$ and $\sin \phi = \dfrac{2K}{K^{2}+1}$.
Also from the third equation of (\ref{eq*3}) it follows that $\cos \phi \in \mathbb{Q}$ because $\lambda_{3}$ is rational number, so $K^{2}\in \mathbb{Q}, \,\, 1\leq K \leq \sqrt{2}$. 
As $\sin^{2} \theta\in\mathbb{Q}$ making $\lambda K=\pm\sqrt{2-K^{2}}$ with $\lambda\in\mathbb{Q}$, then
$K^{2}=\dfrac{2}{\lambda^{2}+1}$ and $0\leq\lambda^{2}\leq 1$ imply that $-1\leq \lambda \leq 1$. 

As $\lambda_{3}$ and $\lambda$ are rational numbers, consider $\lambda_{3}=\dfrac{M}{N}$ and $\lambda=\frac{r}{t}$ with $G.C.D(M,N)=1$ and $G.C.D(r,t)=1$. 
As $-1\leq \lambda \leq 1$, using the relations of (\ref{eq*3}), if we consider $0\leq\lambda\leq 1$, we obtain
\begin{eqnarray}\label{eq*6}
C^{n}=\frac{4 N t^{2}}{M\left(3 t^{2}+r^{2}\right)}, \quad B^{n}=\frac{2 \operatorname{tN}(t-r)}{M\left(3 t^{2}+r^{2}\right)}, \quad A^{n}=\frac{2 N t(t+r)}{M\left(3 t^{2}+r^{2}\right)}.
\end{eqnarray}
Also, if we consider the case $-1\leq\lambda\leq 0$, we obtain analogous expressions.
Without loss of generality, which will be supported later, from the equation $(\ref{e1})$, if we assume that $C$ is even, then $B$ and $A$ are odd.

We observe from (\ref{eq*6}) that if the factor $\dfrac{2 N t}{M\left(3 t^{2}+r^{2}\right)}$ is a natural number greater than one, $A,B$ and $C$ have common prime factors, which is a contradiction. Then consider
\begin{eqnarray}\label{eq*7}
\frac{1}{\beta}=\frac{2 N t}{M\left(3 t^{2}+r^{2}\right)}, \,\, \beta \in \mathbb{N}, \,\, \beta \geq 1.
\end{eqnarray}
From (\ref{eq*7}) and (\ref{eq*6}) we have
\begin{eqnarray}\label{eq*8}
C^{n}=\dfrac{1}{\beta} 2 t,\quad B^{n}=\frac{1}{\beta}(t-r),\quad A^{n}=\frac{1}{\beta}(t+r).
\end{eqnarray}
If $p\neq 2$ was a prime divisor of the numbers $\beta$, $t-r$ y $t+r$, then $t-r=p\theta_{1}$, $t+r=p\theta_{2}$ and so $2t=p(\theta_{1}+\theta_{2})$, and as $p\neq 2$ implies that $p$ divide to $t$, which implies that $p$ divide to $r$, which is a contradiction, since $G.C.D(r,t)=1$.
Therefore, the unique prime divisor of $\beta$ is 2, then
\begin{eqnarray}\label{eq*9}
C^{n}=t, \quad B^{n}=\dfrac{t-r}{2}, \quad A^{n}=\dfrac{t+r}{2}
\end{eqnarray}
As $C$ is even, $t$ is even, so $r$ is odd.
Consequently, of (\ref{eq*7}) 
\begin{eqnarray}\label{eq*10}
1=\frac{4 N t}{M\left(3 t^{2}+r^{2}\right)}
\end{eqnarray}
which implies that (\ref{eq*9}) y (\ref{eq*10}) are situations that will not happen.
Therefore, $\beta=1$ and so
\begin{eqnarray}\label{eq*11}
C^{n}=2 t,\,\,\, B^{n}=t-r,\,\,\,  A^{n}=t+r. 
\end{eqnarray}
If $t$ was odd, (\ref{eq*11}) would be a contradiction, since 2 does not have an exact nth root and $C\in \mathbb{I}$, which is a contradiction.
Therefore, $t$ is even and $r$ is odd.
Then $t$ has the following form
\begin{eqnarray}\label{eq*12}
t=2^{n-1} p_{1}^{n \alpha_{1}} \ldots p_{k}^{n \alpha_{k}}, \,\,\, n\geq 2.
\end{eqnarray}
In addition from (\ref{eq*11}) we have $B^{n}-A^{n}=2r$. 
Now if $n$ was 2, we would have $(B-A)(B+A)=2r$, but $B-A$ and $B+A$ are pairs, which is absurd. Also, is easy to see that for the other even values of $n$ the same contradiction is reached.
Therefore $n$ has to be odd.

In addition of (\ref{eq*7}) we get
\begin{eqnarray}\label{eq*13}
\frac{2Nt}{M(3t^{2}+r^{2})}=1, \,\,\,\mbox{for all}\, \,\, N,M,t,r.
\end{eqnarray}
Now, we will justify why $C$ was supposed to be even. If $C$ is odd, then $A$ is even and $B$ is odd. Define $-A=\widehat{A}$, $-C=\widehat{C}$, $B=\widehat{B}$.
Thus, we consider
$$G(x,y,z)=\widehat{C}^{n}x+\widehat{B}^{n}y+\widehat{A}^{n}z,\,\, \,\,\, \widehat{C}^{n}+\widehat{B}^{n}=\widehat{A}^{n}.$$ Now, doing the same process done with the linear functional $F$, we get
\begin{eqnarray}\label{eq*14}
\widehat{A}^{n}=\frac{1}{\widehat{\lambda}_{3}} \left( \cos \widehat{\phi} +1\right), \,\,\widehat{B}^{n}=\frac{1}{\widehat{\lambda}_{3}} \sin \widehat{\theta} \sin \widehat{\phi}, \,\, \widehat{C}^{n}=\frac{1}{\widehat{\lambda}_{3}} \sin \widehat{\phi} \cos \widehat{\theta},
\end{eqnarray}
where $\widehat{\theta}\in \langle \frac{3\pi}{2}, 2\pi \rangle$ y $\widehat{\phi} \in \langle 0,\pi \rangle$. We also get
$$\dfrac{\cos \widehat{\phi}+1}{\sin \widehat{\phi}}=\cos \widehat{\theta}+\sin \widehat{\theta} = K_{1}, \,\, K_{1}>0$$
doing similar operations as in the first case, we obtain
\begin{eqnarray}\label{eq*15}
A^{n}=2t_{1}, \,\, B^{n}=r_{1}-t_{1},\,\, C^{n}=r_{1}+t_{1}
\end{eqnarray}
or
\begin{eqnarray}\label{eq*16}
A^{n}=t_{1},\,\,\, B^{n}=\frac{r_{1}-t_{1}}{2}, \,\,\, C^{n}=\frac{r_{1}+t_{1}}{2}
\end{eqnarray}
where $K_{1}^{2}=\dfrac{2}{\lambda _{1}^{2}+1}$ and $\lambda_{1}=\dfrac{r_{1}}{t_{1}}>1$, with $G.C.D(r_{1},t_{1})=1$.
Therefore, (\ref{eq*16}) is a contradiction, because $t_{1}$ is even and $r_{1}$ is odd. Then it is justified that this process is only valid if $n$ is odd.

From the relation (\ref{eq*13}) and the fact $G.C.D(M,N)=G.C.D(t,r)=1$, we conclude that
\begin{eqnarray}\label{eq*17}
N=3 t^{2}+r^{2} \,\,\,\, \mbox{and}  \,\,\,\, M=2t.
\end{eqnarray}
Thus, from the third equation of (\ref{eq*3}) 
\begin{eqnarray}\label{eq*18}
C^{n}=2 \frac{N}{M} \cos ^{2} \frac{\phi}{2}.
\end{eqnarray}
Then by (\ref{eq*18}) and using the fact that $C^{n}=2t$ and by (\ref{eq*17}) we get
\begin{eqnarray}\label{eq*19}
t=\frac{\sqrt{N}}{\sqrt{2}} \cos \frac{\phi}{2}=\frac{\sqrt{N}}{2} \sqrt{\cos \phi+1}.
\end{eqnarray}
So by (\ref{eq*19}) and (\ref{eq*4}) 
\begin{eqnarray}\label{eq*20}
t=\frac{\sqrt{N}}{2} \frac{\sqrt{2} K}{\sqrt{K^{2}+1}}, \,\, \mbox{with} \,\, K^{2}=\frac{2}{\lambda^{2}+1} \,\, \mbox{and}\,\, \lambda=\frac{r}{t}.
\end{eqnarray}
At once, by replacing the value of $K$ we have
\begin{eqnarray}\label{eq*21}
t=\dfrac{\sqrt{2}}{2}K\sqrt{r^{2}+t^{2}}.
\end{eqnarray}
If $r^{2}+t^{2}$ was a perfect square, that is, if exist $z\in\mathbb{N}$ such that $r^{2}+t^{2}=z^{2},$ then $K$ must be of the form $K=\frac{\sqrt{2}\alpha}{m}$ such that $G.C.D(\alpha,m)=1$, with $\frac{1}{\sqrt{2}}<\frac{\alpha}{m}\leq 1$, becuase $(1\leq K \leq\sqrt{2})$. Then in (\ref{eq*21}) we get
\begin{eqnarray}\label{eq*22}
t=\frac{\alpha}{m}z
\end{eqnarray}
Therefore, it is clear that $z=\tau m$ for some $\tau \in \mathbb{N}$, so in $r^{2}+t^{2}=z^{2}$
\begin{eqnarray}\label{eq*23}
r^{2}+\alpha^{2}\tau^{2}=\tau^{2}m^{2},
\end{eqnarray}
This implies that $r$ and $t$ have the common factor $\tau$, which is a contradiction, because $M.D.C(t;r)=1$. Therefore, $r^{2}+t^{2}$ cannot be a perfect square.
Hence by (\ref{eq*21}), the expression of $r^{2}+t^{2}$ should be as follows
\begin{eqnarray}\label{eq*24}
r^{2}+t^{2}=2K^{2a}w^{2}.
\end{eqnarray}
for some $a,w\in\mathbb{N}$. Then replacing (\ref{eq*24}) in (\ref{eq*21}), we have
\begin{eqnarray}\label{eq*25}
t=\dfrac{\sqrt{2}}{2}K^{a +1}w \sqrt{2}=wK^{a+1}
\end{eqnarray}
from there $a=2b+1$ because $K^{2}$ is a rational number, for some $b\in\mathbb{N}$. So, we get $t=wK^{2(b+1)}$, with $K^{2}=\dfrac{2}{\lambda^{2}+1}$ and $\lambda=\frac{r}{t}$, the
\begin{eqnarray*}\label{eq*26}
t=\dfrac{w.2^{b+1}.t^{2b+2}}{\left(r^{2}+t^{2} \right)^{b+1}},
\end{eqnarray*}
hence
\begin{eqnarray*}\label{eq*27}
(r^{2}+t^{2})^{b+1}=w\cdot 2^{b+1}\cdot t^{2b+1}.
\end{eqnarray*}
This last relationship is a contradiction, since $G.C.D(r,t)=1$ and $t$ is even and $r$ is odd.

Therefore, this proves that the Fermat equation $A^{n}+B^{n}=C^{n}$ has no solution if $n$ is odd.
\end{proof}

\begin{center} \subsection*{Acknowledgment} \end{center}
The authors thank God for allowing this work to be carried out and completed.

The second author was supported and was funded by CONCYTEC-FONDECYT within the framework of the call
“Proyecto Investigación Básica 2019-01” [380-2019-FONDECYT].
 

\end{document}